\DeclareMathAlphabet\EuR{U}{eur}{m}{n}
\SetMathAlphabet\EuR{bold}{U}{eur}{b}{n}
\theoremstyle{plain}
\newtheorem{theorem}{Theorem}[section]
\newtheorem{lemma}[theorem]{Lemma}
\newtheorem{corollary}[theorem]{Corollary}
\theoremstyle{definition}
\newtheorem{definition}[theorem]{Definition}
\newtheorem{condition}[theorem]{Condition}
\global\let\c@equation=\c@theorem}
\newcommand{\comsquare}[8]                   
{\begin{CD}
#1 @>#2>> #3\\
@V{#4}VV @V{#5}VV\\
#6 @>#7>> #8
\end{CD}
}
\newcommand{\xycomsquare}[8]                   
{\xymatrix
{#1 \ar[r]^{#2} \ar[d]^{#4} &
#3 \ar[d]^{#5}  \\
#6\ar[r]^{#7} &
#8
}
}
\newcommand{\calfin}{\mathcal{FIN}}
\newcommand{\calicyc}{\mathcal{ICY}}
\newcommand{\calvcyc}{\mathcal{VCY}}
\newcommand{\caltr}{\{ \! 1 \! \}}
\newcommand{\calall}{{\cala\!\call\!\call}}
\newcommand{\cala}{{\cal A}}
\newcommand{\calf}{{\cal F}}
\newcommand{\calg}{{\cal G}}
\newcommand{\call}{{\cal L}}
\newcommand{\caln}{{\cal N}}
\newcommand{\calu}{{\cal U}}
\newcommand{\calv}{{\cal V}}
\newcommand{\IQ}{{\mathbb Q}}
\newcommand{\IR}{{\mathbb R}}
\newcommand{\IZ}{{\mathbb Z}}
\newcommand{\curs}{\EuR}
\newcommand{\Or}{\curs{Or}}
\newcommand{\CAT}{\operatorname{CAT}}
\newcommand{\cd}{\operatorname{cd}}
\newcommand{\colim}{\operatorname{colim}}
\newcommand{\cyl}{\operatorname{cyl}}
\newcommand{\Ext}{\operatorname{Ext}}
\newcommand{\hdim}{\operatorname{hdim}}
\newcommand{\id}{\operatorname{id}}
\newcommand{\Min}{\operatorname{Min}}
\newcommand{\pr}{\operatorname{pr}}
\newcommand{\res}{\operatorname{res}}
\newcommand{\supp}{\operatorname{supp}}
\newcommand{\topdim}{\operatorname{top-dim}}
\newcommand{\EGF}[2]{E_{#2}(#1)}                   
\newcommand{\eub}[1]{\underline{E}#1}              
\newcommand{\jub}[1]{\underline{J}#1}              
\newcommand{\edub}[1]{\underline{\underline{E}}#1} 
\newcommand{\higherlim}[3]{{\setbox1=\hbox{\rm lim}
        \setbox2=\hbox to \wd1{\leftarrowfill} \ht2=0pt \dp2=-1pt
        \mathop{\vtop{\baselineskip=5pt\box1\box2}}
        _{#1}}^{#2}#3}
\newcommand{\version}[1]                       
{\begin{center} last edited on #1\\
last compiled on \today\\
name of texfile: \jobname
\end{center}
}
\newcounter{commentcounter}
\begin{document}

\typeout{----------------------------  catvcyc.tex  ----------------------------}

\title{On the classifying space of the family of finite and of virtually cyclic subgroups
for $\CAT(0)$-groups}
\author{Wolfgang L\"uck\thanks{\noindent email:
lueck@math.uni-muenster.de \protect\\
www:~http://www.math.uni-muenster.de/u/lueck/\protect\\
FAX: 49 251 8338370\protect}\\
Fachbereich Mathematik\\ Universit\"at M\"unster\\
Einsteinstr.~62\\ 48149 M\"unster\\Germany}
\maketitle


\typeout{-----------------------  Abstract  ------------------------}

\begin{abstract}
  Let $G$ be a discrete group which acts properly and isometrically on
  a complete $\CAT(0)$-space $X$.  Consider an integer $d$ with 
  $d =  1$ or $d \ge 3$ such that the topological dimension of $X$ is
  bounded by $d$. We show the existence of a $G$-$CW$-model $\eub{G}$
  for the classifying space for proper $G$-actions with $\dim(\eub{G}) \le d$.  
  Provided that the action is also cocompact, we prove the
  existence of a $G$-$CW$-model $\edub{G}$ for the classifying space
  of the family of virtually cyclic subgroups satisfying
  $\dim(\edub{G}) \le d+1$.
  \\[2mm]
  Key words:  classifying spaces of families, dimension, 
  finite and virtually cyclic subgroups\\
  Mathematics Subject Classification 2000: 55R35.
\end{abstract}


 \typeout{--------------------   Section 0: Introduction --------------------------}

\setcounter{section}{-1}
\section{Introduction}

Given a group $G$, denote by $\eub{G}$ a $G$-$CW$-model for the
classifying space for proper $G$-actions and by 
$\edub{G} = \EGF{G}{\calvcyc}$ a $G$-$CW$-model for the classifying space of the
family of virtually cyclic subgroups. Our main theorem which will be proven in
Section~\ref{sec:The_passage_from_finite_to_virtually_cyclic_groups} is

\begin{theorem} \label{the:dim(edub(G))_for_Cat(0)-group} Let $G$ be a
  discrete group which acts properly and isometrically on a complete
  proper $\CAT(0)$-space $X$.  Let $\topdim(X)$ be the topological
  dimension of $X$.  Let $d$ be an integer satisfying $d = 1$ or 
  $d \ge 3$ such that $\topdim(X) \le d$.

  \begin{enumerate}

  \item \label{the:dim(edub(G))_for_Cat(0)-group:fin} Then there is
    $G$-$CW$-model $\eub{G}$ with $\dim(\eub{G}) \le d$;

  \item \label{the:dim(edub(G))_for_Cat(0)-group:vcyc} Suppose that
    $G$ acts by semisimple isometries.  (This is the case if we
    additionally assume that the $G$-action is cocompact.)

    Then there is $G$-$CW$-model $\edub{G}$ with $\dim(\edub{G}) \le d+1$.
  \end{enumerate}
\end{theorem}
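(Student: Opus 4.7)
For part (i), the starting observation is that the complete $\CAT(0)$-space $X$ is already a topological (non-$CW$) model for $\eub{G}$: because $G$ acts properly and isometrically, the fixed set $X^H$ of each finite subgroup $H\le G$ is a nonempty closed convex subspace of $X$ and hence contractible. The substantive content of (i) is therefore to replace $X$ by a $G$-$CW$-complex of the same topological dimension. I would do this by a $G$-equivariant nerve construction applied to a $G$-invariant locally finite open cover of $X$ of multiplicity at most $d+1$ (such covers exist because $\topdim(X)\le d$), followed by equivariant simplicial approximation. The exclusion of $d=2$ reflects the usual Eilenberg--Ganea gap: the translation from topological dimension to $G$-$CW$-dimension is clean away from dimension two.

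For part (ii), the plan is to invoke the Lück--Weiermann pushout construction, which builds $\edub{G}$ out of $\eub{G}$ together with data attached to the maximal infinite virtually cyclic subgroups. Let $\calm$ be a set of representatives of the conjugacy classes of maximal infinite virtually cyclic subgroups of $G$, and for each $V\in\calm$ write $N_V:=N_G V$ and let $\calf[V]$ denote the family of subgroups of $N_V$ that are either finite or commensurable with $V$. There is a $G$-pushout of $G$-$CW$-complexes
\[
\xymatrix{
\coprod_{V\in\calm} G\times_{N_V}\eub{N_V}\ar[r]\ar[d] & \eub{G}\ar[d]\\
\coprod_{V\in\calm} G\times_{N_V}E_{\calf[V]}(N_V)\ar[r] & \edub{G}
}
\]
whose left vertical arrow is a cofibration, so $\dim\edub{G}$ equals the maximum of $\dim\eub{G}$ and the maxima of $\dim E_{\calf[V]}(N_V)$ over $V\in\calm$. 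Part (i), applied to $G$ and to each $N_V$ acting properly and isometrically on $X$, already bounds $\dim\eub{G}$ and each $\dim\eub{N_V}$ by $d$, so the theorem reduces to producing, for each $V\in\calm$, an $N_V$-$CW$-model of $E_{\calf[V]}(N_V)$ of dimension at most $d+1$.

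The main obstacle lies exactly in this last step, and it is where the semisimplicity hypothesis is used decisively. For semisimple $V$, the minimum set $\Min(V)\subseteq X$ is nonempty, closed, convex and $N_V$-invariant, and splits isometrically as a product $\Min(V)\cong Y\times\IR$ in which the maximal infinite cyclic translation subgroup $V_0\le V$ acts on the $\IR$-factor by a nontrivial translation. A finite-index subgroup $N_0\le N_V$ preserves this splitting, so $N_0/V_0$ acts properly and isometrically on the complete $\CAT(0)$-space $Y$ with $\topdim(Y)\le\topdim(\Min(V))-1\le d-1$. Part (i) applied to this smaller action yields a low-dimensional $(N_0/V_0)$-$CW$-model of $\eub{N_0/V_0}$; the product $\eub{N_0/V_0}\times\IR$ is then an $N_0$-$CW$-model for $E_{\calf[V_0]}(N_0)$, and a further pushout along the finite-index inclusion $N_0\le N_V$ promotes this to the required $N_V$-model. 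The delicate bookkeeping is to keep the final dimension at $d+1$, in particular when $d=3$ so that $\topdim(Y)$ lands in the excluded dimension two; resolving that case, presumably via a more careful CAT(0)-geometric construction of the model on $Y$ rather than the generic nerve argument, is the technical heart of the proof.
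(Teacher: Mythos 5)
There is a genuine gap in both halves, and in part (ii) you have in effect acknowledged it yourself. For part (i): the nerve of a $G$-invariant open cover of multiplicity $\le d+1$ is a proper $d$-dimensional $G$-$CW$-complex $Y$ receiving a $G$-map $f\colon X\to Y$, but $Y$ is \emph{not} a model for $\eub{G}$ --- its fixed-point sets have no reason to be contractible, and no amount of equivariant simplicial approximation changes that. What the construction actually yields is a domination: since $X$ is a model for $\jub{G}$ and hence $G$-homotopy equivalent to $\eub{G}$, the universal property gives $h\colon Y\to\eub{G}$ with $h\circ f\circ i\simeq_G\id_{\eub{G}}$. Converting a domination by a $d$-dimensional complex into a $d$-dimensional $G$-$CW$-model is an algebraic finiteness argument for the cellular chain complex over the fundamental category (after first arranging $2$-connectivity of the retraction), and this is exactly where $d\ge 3$ enters; the case $d=1$ needs a separate argument, namely showing $\cd_{\IQ}(G)\le 1$ and invoking Dunwoody. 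Your sketch names the Eilenberg--Ganea phenomenon but supplies no mechanism, and the mechanism is the content.

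For part (ii) there are two problems. First, the L\"uck--Weiermann pushout is not indexed by maximal infinite virtually cyclic subgroups and their normalizers: maximal ones need not exist, and the correct isotropy group of a commensurability class $[C]$ is $N_G[C]=\{g\in G\mid |gCg^{-1}\cap C|=\infty\}$, which in general strictly contains $N_GC$ and is not the normalizer of any single subgroup. The paper uses semisimplicity (via translation lengths) to verify that $gh^kg^{-1}=h^l$ forces $|k|=|l|$, deduces $N_G[C]=\bigcup_{k\ge1}N_Gk!C$, and then applies a colimit/mapping-telescope argument; that telescope is the actual source of the $+1$ in the dimension bound. Second, your bookkeeping $\topdim(Y)\le d-1$ (splitting off the $\IR$-factor of $\Min$) is both unjustified as stated for topological dimension and, more importantly, unnecessary --- and it is precisely what creates your unresolved $d=3$ case. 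The paper never reduces dimension: it only records that $Y(g)$ is a complete $\CAT(0)$-space with $\topdim(Y(g))\le\topdim(X)\le d$ on which $W_GC=N_GC/C$ acts properly and isometrically, applies part (i) to obtain a $d$-dimensional model for $\eub{W_GC}$, and observes that its restriction to $N_Gk!C$ is already a model for the classifying space of the relevant family $\calg_G(C)$ (no product with $\IR$ is needed, because a subgroup of $N_Gk!C$ lies in $\calg_G(C)$ exactly when its image in $W_Gk!C$ is finite). With that arrangement the excluded dimension two never arises, and the ``technical heart'' you point to dissolves.
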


There is the question whether for any group $G$ the inequality
\begin{equation}
  \hdim^{G}(\eub{G}) - 1 \le \hdim^{G}(\edub{G}) \le \hdim^{G}(\eub{G}) + 1  
  \label{conj_inequality}
\end{equation}
holds, where $\hdim^{G}(\eub{G})$ is the minimum of the dimensions of
all possible $G$-$CW$-models for $\eub{G}$ and $\hdim^{G}(\edub{G})$
is defined analogously
(see~\cite[Introduction]{Lueck-Weiermann(2007)}).
Since $\hdim(\eub{G})
\le 1 + \hdim(\edub{G})$ holds for all groups $G$
(see~\cite[Corollary~5.4]{Lueck-Weiermann(2007)}), 
Theorem~\ref{the:dim(edub(G))_for_Cat(0)-group} implies

\begin{corollary} \label{cor:inequality} Let $G$ be a discrete group
  and let $X$ be complete $\CAT(0)$-space $X$ with finite topological
  dimension $\topdim(X)$. Suppose that $G$ acts properly and
  isometrically on $X$. Assume that the $G$-action is by semisimple
  isometries. (The last condition is automatically satisfied if we
  additionally assume that the $G$-action is cocompact.)  Suppose that
  $\topdim(X) = \hdim^{G}(\eub{G}) \not= 2$.

  Then inequality~\eqref{conj_inequality} is true.
\end{corollary}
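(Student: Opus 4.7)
The plan is to read the corollary as a bookkeeping exercise: the substantive work is packaged in Theorem~\ref{the:dim(edub(G))_for_Cat(0)-group}(ii) and in the already quoted inequality $\hdim^{G}(\eub{G}) \le 1 + \hdim^{G}(\edub{G})$ from \cite[Corollary~5.4]{Lueck-Weiermann(2007)}. All I must do is fit the hypotheses of these two results together.

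I would set $d := \topdim(X)$, which by hypothesis equals $\hdim^{G}(\eub{G})$ and is not equal to $2$. If $d = 0$, then $X$ is a single point, hence $G$ is finite and in particular virtually cyclic, so a one-point space serves as $\edub{G}$ and both desired inequalities are immediate. In the remaining case one has $d = 1$ or $d \ge 3$, exactly the numerical restriction needed in Theorem~\ref{the:dim(edub(G))_for_Cat(0)-group}. Since the action is proper, isometric and semisimple (the hypothesis of the corollary being tailored to supply exactly this), part~\ref{the:dim(edub(G))_for_Cat(0)-group:vcyc} of the theorem yields a $G$-$CW$-model $\edub{G}$ of dimension at most $d+1$. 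Taking minima over all models gives the upper estimate
\[
\hdim^{G}(\edub{G}) \;\le\; d+1 \;=\; \hdim^{G}(\eub{G}) + 1.
\]

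For the lower estimate I would simply quote $\hdim^{G}(\eub{G}) \le 1 + \hdim^{G}(\edub{G})$ from \cite[Corollary~5.4]{Lueck-Weiermann(2007)}, rewritten as $\hdim^{G}(\eub{G}) - 1 \le \hdim^{G}(\edub{G})$. Stringing these two inequalities together reproduces \eqref{conj_inequality}. The only place where any care is needed is verifying that the dimensional restriction $d = 1$ or $d \ge 3$ is exactly what the hypothesis $\topdim(X) \ne 2$ gives (up to the degenerate case $d = 0$ disposed of above); there is no genuine obstacle, since the analytic and geometric work all sits inside Theorem~\ref{the:dim(edub(G))_for_Cat(0)-group}.
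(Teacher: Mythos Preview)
Your proposal is correct and follows essentially the same route as the paper: the paper derives the corollary in one sentence by combining Theorem~\ref{the:dim(edub(G))_for_Cat(0)-group}~\ref{the:dim(edub(G))_for_Cat(0)-group:vcyc} for the upper bound with \cite[Corollary~5.4]{Lueck-Weiermann(2007)} for the lower bound, which is exactly what you do. Your explicit handling of the degenerate case $d=0$ (forcing $X$ to be a point and $G$ finite) is a small addition the paper does not spell out.
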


We will prove at the end of 
Section~\ref{sec:The_passage_from_finite_to_virtually_cyclic_groups}

\begin{corollary} \label{cor:manifold} Suppose that $G$ is virtually
  torsionfree. Let $M$ be a simply connected complete Riemannian
  manifold of dimension $n$ with non-negative sectional curvature.
  Suppose that $G$ acts on $M$ properly, isometrically and
  cocompactly. Then
$$\begin{array}{rcccl}
  & & \hdim(\eub{G}) & = & n;
  \\
  n-1 & \le & \hdim(\edub{G}) & \le & n+1.
\end{array}
$$
In particular~\eqref{conj_inequality} holds.
\end{corollary}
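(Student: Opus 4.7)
The plan is to derive the corollary by combining Theorem~\ref{the:dim(edub(G))_for_Cat(0)-group} with a lower bound on $\hdim^{G}(\eub{G})$ coming from the virtually torsion-free hypothesis and with the general inequality $\hdim^{G}(\eub{G}) \le \hdim^{G}(\edub{G}) + 1$ from~\cite[Corollary~5.4]{Lueck-Weiermann(2007)}. To enter the setting of the theorem I first observe that a simply connected complete Riemannian manifold of non-positive sectional curvature (which is how I read the standing hypothesis, so that Cartan--Hadamard applies) is a complete $\CAT(0)$-space with $\topdim(M) = n$, and that the cocompact isometric action is automatically by semisimple isometries. Therefore, for $n = 1$ or $n \ge 3$, applying Theorem~\ref{the:dim(edub(G))_for_Cat(0)-group} with $d = n$ yields the upper bounds $\hdim^{G}(\eub{G}) \le n$ and $\hdim^{G}(\edub{G}) \le n+1$.

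The second step is to establish the matching lower bound $\hdim^{G}(\eub{G}) \ge n$. Pick a torsion-free subgroup $H \le G$ of finite index, which exists by assumption. Since the $G$-action is proper and $H$ is torsion-free, $H$ acts freely, properly, and cocompactly on $M$; the contractible manifold $M$ therefore serves as a model for $EH$, and the quotient $M/H$ is a closed aspherical $n$-manifold, hence a $K(H,1)$ of dimension $n$. Using $\IF_2$ coefficients to bypass orientability, Poincar\'e duality gives $H_n(M/H;\IF_2) \ne 0$, so $\cd_{\IF_2}(H) = n$ and in particular $\hdim(EH) \ge n$. Any $G$-$CW$-model for $\eub{G}$, regarded as an $H$-$CW$-complex by restriction, has only trivial $H$-isotropy (the isotropy is finite and $H$ is torsion-free), is contractible, and so is an $H$-$CW$-model for $EH$ of the same dimension. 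Hence $\hdim^{G}(\eub{G}) \ge n$, and combined with the upper bound one concludes $\hdim^{G}(\eub{G}) = n$.

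For the lower bound on $\edub{G}$, I would then invoke the cited inequality $\hdim^{G}(\eub{G}) \le \hdim^{G}(\edub{G}) + 1$, which together with $\hdim^{G}(\eub{G}) = n$ immediately forces $\hdim^{G}(\edub{G}) \ge n-1$. Assembling everything yields $n-1 \le \hdim^{G}(\edub{G}) \le n+1 = \hdim^{G}(\eub{G}) + 1$, which is exactly inequality~\eqref{conj_inequality}.

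The principal obstacle I expect is the borderline case $n = 2$: Theorem~\ref{the:dim(edub(G))_for_Cat(0)-group} does not cover $d=2$, and Corollary~\ref{cor:inequality} accordingly excludes $\topdim(X)=2$. For $n=2$ one would need either an ad hoc construction of a two-dimensional $\eub{G}$ (e.g., from a $G$-equivariant smooth triangulation of the surface $M$, which by Cartan--Hadamard is diffeomorphic to $\IR^2$) or a direct verification of~\eqref{conj_inequality} for cocompact proper isometry groups of the Euclidean plane. Apart from this edge case, the only subtle point in the plan is the dimension-preserving restriction from a model for $\eub{G}$ to one for $EH$, which is precisely where torsion-freeness of $H$ is essential.
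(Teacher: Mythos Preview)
Your argument for $n\neq 2$ is essentially the paper's: the same lower bound via a torsion-free finite-index subgroup and top homology of the quotient manifold, the same upper bounds from Theorem~\ref{the:dim(edub(G))_for_Cat(0)-group}, and the same appeal to \cite[Corollary~5.4]{Lueck-Weiermann(2007)} for $\hdim(\edub{G})\ge n-1$. The only cosmetic difference is that you use $\IF_2$-coefficients, while the paper passes further to an orientation-preserving subgroup and works over~$\IZ$. (Your reading of the curvature sign is also the intended one.)

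The gap you flag at $n=2$ is real, and the paper fills it in two steps that differ from your sketch. First, for the upper bound $\hdim(\eub{G})\le n$ the paper does \emph{not} go through Theorem~\ref{the:dim(edub(G))_for_Cat(0)-group}\ref{the:dim(edub(G))_for_Cat(0)-group:fin}; instead it invokes Abels~\cite[Theorem~4.15]{Abels(1978)} to conclude that $M$ itself is a $G$-$CW$-model for $\eub{G}$, giving $\hdim(\eub{G})=n$ uniformly in $n$ (so no ad hoc equivariant triangulation is needed). Second, for $\hdim(\edub{G})$ when $n=2$, the paper argues that a torsion-free finite-index $G_0\le G$ has $G_0\backslash M$ a closed nonpositively curved surface, so $G_0$ is either $\IZ^2$ or a hyperbolic surface group; hence $G$ is virtually $\IZ^2$ or word-hyperbolic, and in either case one reads off $\hdim(\edub{G})\in\{2,3\}$ from \cite[Example~5.21]{Lueck-Weiermann(2007)} respectively \cite[Theorem~3.1, Example~3.6, Theorem~5.8]{Lueck-Weiermann(2007)} and \cite{Juan-Pineda-Leary(2006)}. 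With this classification in hand the $n=2$ case is complete.
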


If $G$ is the fundamental group of an $n$-dimensional closed
hyperbolic manifold, then $\hdim(\eub{G}) = \hdim(\edub{G}) = n$
by~\cite[Example~5.12]{Lueck-Weiermann(2007)}.  If $G$ is virtually
$\IZ^n$ for $n \ge 2$, then $\hdim(\eub{G}) = n$ and $\hdim(\edub{G})= n+1$ 
by~\cite[Example~5.21]{Lueck-Weiermann(2007)}. Hence the cases
$\hdim(\edub{G}) = \hdim(\eub{G})$ and 
$\hdim(\edub{G}) = \hdim(\eub{G}) + 1$ do occur in the situation of
Corollary~\ref{cor:manifold}.  There exists groups $G$ with
$\hdim(\edub{G}) = \hdim(\eub{G}) - 1$
(see~\cite[Example~5.29]{Lueck-Weiermann(2007)}). But we do not
believe that this is possible in the situation of
Corollary~\ref{cor:inequality} or Corollary~\ref{cor:manifold}.

The paper was supported by the Sonder\-forschungs\-be\-reich
478 -- Geometrische Strukturen in der Mathematik -- and the
Max-Planck-Forschungspreis and the Leibniz-Preis of the author.


\typeout{--------------------   Section 1:   --------------------------}

\section{Classifying Spaces for Families}
\label{sec:Classifying_Spaces_for_Families}

We briefly recall the notions of a family of subgroups and the associated
classifying space. For more information, we refer for instance to the 
original source~\cite{Dieck(1972)} or to the  survey article~\cite{Lueck(2005s)}.

A \emph{family $\calf$ of subgroups} of $G$ is a set of  subgroups of $G$ which is
closed under conjugation and taking subgroups. Examples for $\calf$ are
\begin{align*}
    \caltr    & = \{\text{trivial subgroup}\}; \\
    \calfin   & = \{\text{finite subgroups}\}; \\
    \calvcyc  & = \{\text{virtually cyclic subgroups}\};\\
    \calall   & = \{\text{all subgroups}\}.
\end{align*}

Let $\calf$ be a family of subgroups of $G$. A model for the \emph{classifying
space $\EGF{G}{\calf}$ of the family $\calf$} is a $G$-$CW$-complex $X$ all of
whose isotropy groups belong to $\calf$ such that for any $G$-$CW$-complex
$Y$ with isotropy groups in $\calf$ there exists a $G$-map $Y \to X$
and any two $G$-maps $Y \to X$ are $G$-homotopic. 
In other words, $X$ is a terminal object in the
$G$-homotopy category of $G$-$CW$-complexes whose isotropy groups belong to
$\calf$. In particular, two models for $\EGF{G}{\calf}$ are $G$-homotopy
equivalent. 

There exists a model for $\EGF{G}{\calf}$ for any group $G$ and any family $\calf$ of
subgroups. There is even a functorial construction 
(see~\cite[page~223 and Lemma~7.6 (ii)]{Davis-Lueck(1998)}).

A $G$-$CW$-complex $X$ is a model for $\EGF{G}{\calf}$ if and only if the
$H$-fixed point set $X^H$ is contractible for $H \in \calf$ and is empty for $H
\not\in \calf$.

We abbreviate $\eub{G} := \EGF{G}{\calfin}$ and call it the \emph{universal
$G$-$CW$-complex for proper $G$-actions}. We also abbreviate $\edub{G} :=
\EGF{G}{\calvcyc}$.

A model for $\EGF{G}{\calall}$ is $G/G$. A model for $\EGF{G}{\caltr}$ is the
same as a model for $EG$, which denotes the total space of the universal
$G$-principal bundle $EG \to BG$. 

One can also define a numerable version of the space for proper
$G$-actions to $G$ which is denoted by $\jub{G}$. 
It is not necessarily a $G$-$CW$-complex. A metric space $X$ on which 
$G$ acts isometrically and properly is a model for $\jub{G}$ if and only if
the two projections $X \times X \to X$ onto the first  and second factor
are $G$-homotopic to one another.
If $X$ is a complete $\CAT(0)$-space on which $G$-acts properly
and isometrically, then $X$ is a model for $\jub{G}$, the desired $G$-homotopy
is constructed using the geodesics joining two points in $X$ 
(see~\cite[Proposition~1.4 in~II.1 on page~160]{Bridson-Haefliger(1999)}).

One motivation for studying the spaces $\eub{G}$ and $\edub{G}$ comes from the 
Baum-Connes Conjecture and the Farrell-Jones Conjecture.


\typeout{-------------------- Section 2 --------------------------}

\section{Topological and $CW$-dimension}
\label{sec:Topological_and_CW-dimension}

Let $X$ be a topological space. Let $\calu$ be an open covering.
Its \emph{dimension} $\dim(\calu) \in \{0,1,2, \ldots \} \amalg \{\infty\}$ 
is the infimum over all integers $d \ge 0$
such that for any collection $U_0$, $U_1$, \ldots , $U_d$ of pairwise
distinct elements in $\calu$ the intersection $\bigcap_{i=0}^d U_i$ is empty.
An open covering $\calv$ is a refinement of $\calu$ if for every $V \in \calv$ there
is $U \in \calu$ with $V \subseteq U$. 

\begin{definition}[Topological dimension]
  The \emph{topological dimension} of a topological space $X$
$$\topdim(X) \in \{0,1,2, \ldots \} \amalg \{\infty\}$$ 
is the infimum over all integers $d \ge 0$ such that any open covering
$\calu$ possesses a refinement $\calv$ with $\dim(\calv) \le d$.
\end{definition}

Let $Z$ be a metric space. We will denote for $z \in Z$ and $r \ge 0$
by $B_r(z)$ and $\overline{B}_r(z)$ respectively the \emph{open ball}
and \emph{closed ball} respectively around $z$ with radius $r$.  We
call $Z$ \emph{proper} if for each $z \in Z$ and $r \ge 0$ the closed
ball $\overline{B}_r(z)$ is compact.  A group $G$ acts \emph{properly}
on the topological space $Z$ if for any $z \in Z$ there is an open
neighborhood $U$ such that the set $\{g \in G \mid g\cdot U \cap U
\not= \emptyset\}$ is finite.  In particular every isotropy group is
finite. If $Z$ is a $G$-$CW$-complex, then $Z$ is a proper $G$-space
if and only if the isotropy group of any point in $Z$ is finite
(see~\cite[Theorem 1.23]{Lueck(1989)}).

\begin{lemma} \label{dim(Z/G)_le_dim(Z)} Let $Z$ be a proper metric
  space.  Suppose that $G$ acts on $Z$ isometrically and properly.
  Then we get for the topological dimensions of $X$ and $G\backslash
  X$
$$\topdim(G\backslash X) \le \topdim(X).$$
\end{lemma}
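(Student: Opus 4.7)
Plan.

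The plan is to localize the problem via the slice structure of a proper isometric action on a metric space, thereby reducing the inequality to the case of a quotient by a finite group, which is then handled by the classical fact that closed continuous surjections with zero-dimensional fibers do not raise the topological dimension.

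Let $\calu$ be an open cover of $G\backslash Z$ and let $p\colon Z \to G\backslash Z$ be the quotient map. Because $Z$ is a proper metric space and the action is proper and isometric, for every $z \in Z$ one can find an open $G_z$-invariant slice $S_z \ni z$ with $g S_z \cap S_z = \emptyset$ for every $g \notin G_z$, chosen small enough that $p(S_z)$ lies in some element of $\calu$. The map $p$ is open, and it restricts to a homeomorphism $G_z \backslash S_z \cong p(S_z)$; this exhibits each $p(S_z)$ as an open subset of $G\backslash Z$ homeomorphic to the quotient of an open subspace of $Z$ by the finite group $G_z$.

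The collection $\{p(S_z)\}_{\bar z \in G\backslash Z}$ is an open refinement of $\calu$. Since the action is proper and isometric, orbits are closed in $Z$, so $G\backslash Z$ is Hausdorff and inherits a metric from the quotient pseudometric; in particular it is paracompact. By the local-to-global principle for topological dimension on metric spaces, it suffices to bound $\topdim(p(S_z)) = \topdim(G_z \backslash S_z)$ at each $\bar z \in G\backslash Z$. The problem is thereby reduced to the finite-group case $\topdim(G_z \backslash S_z) \le \topdim(Z)$. For this, the quotient map $q_z \colon S_z \to G_z \backslash S_z$ is closed, since for every closed $C \subseteq S_z$ the preimage $q_z^{-1}(q_z(C)) = \bigcup_{h \in G_z} h C$ is a finite union of closed sets. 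It is also a finite-to-one surjection, hence its fibers are zero-dimensional. The classical Hurewicz theorem (a closed continuous surjection with zero-dimensional fibers does not raise topological dimension) gives $\topdim(G_z\backslash S_z) \le \topdim(S_z)$, and the monotonicity of $\topdim$ under passage to subspaces yields $\topdim(S_z) \le \topdim(Z)$, completing the argument.

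The main obstacle is that the naive approach---lift $\calu$ to a $G$-invariant open cover $p^{-1}(\calu)$ of $Z$, refine to order at most $\topdim(Z)$, and project down---generally fails to preserve the order bound, because distinct $G$-translates of a single refinement element in $Z$ can accumulate at one point of $G\backslash Z$ and thereby create additional incidences in the quotient. The slice decomposition sidesteps this difficulty by restricting attention to open neighborhoods on which only the finite isotropy group $G_z$ acts; there the quotient map is closed and the Hurewicz-type inequality applies directly, after which the local-to-global behaviour of $\topdim$ on the metric space $G\backslash Z$ globalizes the conclusion.
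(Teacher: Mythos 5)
Your overall architecture --- localize to slices around each orbit, reduce to the quotient by the finite isotropy group, and globalize using that local dimension equals dimension for metrizable spaces --- matches the paper's proof in spirit (the paper uses closed metric balls and a countable closed sum theorem where you use open slices and the local dimension theorem; either globalization device is fine, and your slice construction and the homeomorphism $G_z\backslash S_z \cong p(S_z)$ are correct). The genuine gap is at the crucial local step. The theorem you invoke, ``a closed continuous surjection with zero-dimensional fibers does not raise topological dimension,'' is false. Hurewicz's dimension-\emph{lowering} theorem for closed maps with zero-dimensional fibers gives the \emph{opposite} inequality $\topdim(S_z) \le \topdim(G_z\backslash S_z)$, while his dimension-\emph{raising} theorem only gives $\topdim(G_z\backslash S_z) \le \topdim(S_z) + k - 1$ when every fiber has at most $k$ points --- and that bound is sharp, so closed finite-to-one maps genuinely can raise dimension. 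A concrete counterexample to the statement you use is the standard at-most-$2$-to-$1$ surjection from the Cantor set onto $[0,1]$: it is continuous, closed (compact domain, Hausdorff codomain), has finite and hence zero-dimensional fibers, yet raises the dimension from $0$ to $1$.

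What actually makes the finite-quotient step work is not the closedness of $q_z$ but the additional fact that an orbit map is \emph{open}; the inequality $\topdim(G_z\backslash S_z) \le \topdim(S_z)$ for a finite group acting on a separable metric space is true, but it is a genuinely different and less elementary statement than either Hurewicz theorem. This is precisely the point at which the paper must appeal to an external result (an exercise in Bredon's book on compact transformation groups, applied after first replacing the open neighborhood by a compact closed ball $\overline{B}_{\epsilon(z)}(z)$, so that the hypotheses of that result are met). Your reduction to the local finite-group situation is sound, but the quotient-by-a-finite-group step needs to be justified by such a result --- or by a theorem on open-and-closed finite-to-one surjections --- rather than by the closed-map Hurewicz theorem, which does not deliver the required direction of the inequality.
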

\begin{proof}
  Since $G$ acts properly and isometrically, we can find for every 
  $z \in Z$ a real number $\epsilon(z) > 0$ such that we have for all $g \in G$
$$
g \cdot B_{7\epsilon(z)}(z) \cap B_{7\epsilon}(z) \not= \emptyset
\Longleftrightarrow g \cdot B_{7\epsilon(z)}(z) = B_{7\epsilon(z)}(z)
\Longleftrightarrow g \in G_z.
$$
We can arrange that $\epsilon(gz) = \epsilon(z)$ holds for $z \in Z$
and $g \in G$.  Consider $G \cdot \overline{B}_{\epsilon}(z)$. We
claim that this set is closed in $Z$.  We have to show for a sequence
$(z_n)_{n\ge 0}$ of elements in $\overline{B}_{\epsilon}(z)$ and
$(g_n)_{n \ge 0}$ of elements in $G$ and $x \in Z$ with 
$\lim_{n \to \infty} g_n z_n = x$ that $x$ belongs to $G \cdot
\overline{B}_{\epsilon}(z)$. Since $X$ is proper, we can find 
$y \in \overline{B}_{\epsilon}(z)$ such that $\lim_{n \to \infty} z_n = y$.
Choose $N = N(\epsilon)$ such that $d_X(g_nz_n,x) \le \epsilon$ and
$d_X(z_n,y) \le \epsilon$ holds for $n \ge N$.  We conclude for $n \ge N$
\begin{eqnarray*}
  d_x(g_ny,x) 
  & \le & 
  d_X(g_ny,g_nz_n) + d_X(g_nz_n,x)
  \\
  & = & 
  d_X(y,z_n) + d_X(g_nz_n,x)
  \\
  & \le &
  \epsilon +\epsilon
  \\ 
  & = & 
  2\epsilon.
\end{eqnarray*}
This implies for $n \ge N$
\begin{eqnarray*}
  d_X(g_n^{-1}g_Nz,z) 
  & = & 
  d_X(g_Nz,g_nz) 
  \\
  & \le & 
  d_X(g_Nz,g_Ny) + d_X(g_Ny,x) + d_X(x,g_ny) + d_X(g_ny,g_nz)
  \\
  & = & 
  d_X(z,y) + d_X(g_Ny,x) + d_X(g_ny,x) + d_X(y,z)
  \\
  & \le & 
  \epsilon + 2\epsilon + 2\epsilon + \epsilon
  \\
  & = & 
  6 \epsilon.
\end{eqnarray*}
Hence $g_n^{-1}g_N \in G_z$ for $n \ge N$. Since $G_z$ is finite, we
can arrange by passing to subsequences that $g_0 = g_n$ holds for $n
\ge 0$. Hence
$$x = \lim_{n \to \infty} g_n z_n =  \lim_{n \to \infty} g_0  z_n
= g_0 \cdot \lim_{n \to \infty} z_n = g_0 \cdot y \in G\cdot
\overline{B}_{\epsilon}(z).$$ Choose a set-theoretic section $s \colon
G/G_z \to G$ of the projection $G \to G/G_z$. The map
$$G/G_z \times B_{7\epsilon(z)}(z) \xrightarrow{\cong} G \cdot B_{7\epsilon(z)}(z), 
\quad (gG_z,x) \mapsto s(gG_z)\cdot x$$ is bijective, continuous and open
and hence a homeomorphism. It induces a homeomorphism
$$G/G_z \times \overline{B}_{\epsilon(z)}(z) 
\xrightarrow{\cong} G \cdot \overline{B}_{\epsilon(z)}(z).$$
This implies
\begin{eqnarray}
  \topdim\bigl(\overline{B}_{\epsilon(z)}(z)\bigr) 
  & = & 
  \topdim\bigl(G \cdot \overline{B}_{\epsilon(z)}(z)\bigr).
  \label{dim(Z/G)_le_dim(Z):dim(B)_is_dim(gB)}
\end{eqnarray}

Let $\pr \colon Z \to G\backslash Z$ be the projection.  It induces a
bijective continuous map $G_z\backslash \overline{B}_{\epsilon(z)}(z)
\xrightarrow{\cong} \pr\bigl(\overline{B}_{\epsilon(z)}(z)\bigr)$
which is a homeomorphism since $\overline{B}_{\epsilon(z)}(z)$ and
hence $G_z\backslash \overline{B}_{\epsilon(z)}(z)$ is compact.  Hence
we get
\begin{eqnarray}
  \topdim\bigl(\pr(\overline{B}_{\epsilon(z)}(z))\bigr) 
  & = &
  \topdim\bigl(G_z\backslash \overline{B}_{\epsilon(z)}(z)\bigr).
  \label{dim(Z/G)_le_dim(Z):dim(pr(B))_is_dim(B/G_z)}
\end{eqnarray}
Since the metric space $\overline{B}_{\epsilon(z)}(z)$ is compact and
hence contains a countable dense set and $G_z$ is finite, we conclude
from~\cite[Exercise~in Chapter~II on page~112]{Bredon(1972)}
\begin{eqnarray}
  \topdim\bigl(G_z\backslash \overline{B}_{\epsilon(z)}(z)\bigr) 
  & \le &
  \topdim\bigl(\overline{B}_{\epsilon(z)}(z)\bigr).
  \label{dim(Z/G)_le_dim(Z):dim(B/G_z)_le_dim(B)}
\end{eqnarray}
From~\eqref{dim(Z/G)_le_dim(Z):dim(B)_is_dim(gB)},
\eqref{dim(Z/G)_le_dim(Z):dim(pr(B))_is_dim(B/G_z)}
and~\eqref{dim(Z/G)_le_dim(Z):dim(B/G_z)_le_dim(B)} we conclude that
$G \cdot \overline{B}_{\epsilon(z)}(z) \subseteq Z$ and
$\pr\bigl(\overline{B}_{\epsilon(z)}(z)\bigr) \subseteq G\backslash Z$
are closed and satisfy
\begin{eqnarray}
  \topdim\bigl(\pr(\overline{B}_{\epsilon(z)}(z))\bigr) 
  & \le  &
  \topdim\bigl(G \cdot \overline{B}_{\epsilon(z)}(z)\bigr).
  \label{dim(Z/G)_le_dim(Z):dim(pr(B))_le_dim(GB)}
\end{eqnarray}

Since $Z$ is proper, it is the countable union of compact subspaces
and hence contains a countable dense subset. This is equivalent to the
condition that $Z$ has a countable basis for its topology. Obviously
the same is true for $G\backslash Z$. We conclude
from~\cite[Theorem~9.1 in in Chapter~7.9 on page~302 and Exercise~9 in
Chapter~7.9 on page~315]{Munkres(1975)}
\begin{eqnarray}
  \topdim(Z) 
  & = & 
  \sup\bigl\{\topdim\bigl(G \cdot \overline{B}_{\epsilon(z)}(z)\bigr)\bigr\};
  \label{dim(Z)_is_sup}
  \\
  \topdim(G\backslash Z) 
  & = & 
  \sup\bigl\{\topdim\bigl(\pr(\overline{B}_{\epsilon(z)}(z))\bigr)\bigr\}.
  \label{dim(Z/G_is_sup}
\end{eqnarray}
Now Lemma~\ref{dim(Z/G)_le_dim(Z)} follows from
\eqref{dim(Z/G)_le_dim(Z):dim(pr(B))_le_dim(GB)}, \eqref{dim(Z)_is_sup} 
and~\eqref{dim(Z/G_is_sup}.
\end{proof}

In the sequel we will equip a simplicial complex with the weak
topology, i.e., a subset is closed if and only if its intersection
with any simplex $\sigma$ is a closed subset of $\sigma$. With this
topology a simplicial complex carries a canonical $CW$-structure.

Let $X$ be a $G$-space. We call a subset $U \subseteq X$ a
\emph{$\calfin$-set} if we have $gU \cap U \not= \emptyset \implies gU = U$ 
for every $g \in G$ and $G_U := \{g \in G \mid g \cdot U = U\}$
is finite. Let $\calu$ be a covering of $X$ by open
$\calfin$-subset. Suppose that $\calu$ is $G$-invariant, i.e., we have
$g \cdot U \in \calu$ for $g \in G$ and $U \in \calu$.  Define its
\emph{nerve} $\caln(\calu)$ to be the simplicial complex whose
vertices are the elements in $\calu$ and for which the pairwise
distinct vertices $U_0$, $U_1$, \ldots , $U_d$ span a $d$-simplex if
and only if $\bigcap_{i=0}^d U_i \not= \emptyset$. The action of $G$
on $X$ induces an action on $\calu$ and hence a simplicial action on
$\caln(\calu)$.  The isotropy group of any vertex is finite and hence
the isotropy group of any simplex is finite.  Let $\caln(\calu)'$ be
the barycentric subdivision. It inherits a simplicial $G$-action from
$\caln(\calu)$ such that for any $g \in G$ and any simplex $\sigma$
whose interior is denoted by $\sigma^{\circ}$ and which satisfies $g
\cdot \sigma^{\circ} \cap \sigma^{\circ} \not= \emptyset$ we have $gx = x$ 
for all $x \in \sigma^{\circ}$.  In particular $\caln(\calu)'$ is
a $G$-$CW$-complex and agrees as a $G$-space with $\caln(\calu)$.

\begin{lemma}\label{lem:map_to:G-CW_complex}
  Let $n$ be an integer with $n \ge 0$.  Let $X$ be a proper metric
  space whose topological dimension satisfies $\topdim(X) \le n$.
  Suppose that $G$ acts properly and isometrically on $X$.

  Then there exists a proper $n$-dimensional $G$-$CW$-complex $Y$
  together with a $G$-map $f \colon X \to Y$.
\end{lemma}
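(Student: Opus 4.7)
The plan is to produce $Y$ as (the barycentric subdivision of) the nerve of a suitable $G$-invariant open cover $\calu$ of $X$ by $\calfin$-subsets with covering dimension at most $n$, and to let $f$ be the canonical map into this nerve built from a $G$-invariant partition of unity subordinate to $\calu$. The paragraph preceding the lemma already guarantees that such a barycentrically subdivided nerve is a proper $G$-$CW$-complex of the expected dimension, so the entire problem reduces to producing $\calu$.

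I would work on the orbit space $G\backslash X$. The $\calfin$-balls $B_{\epsilon(z)}(z)$ produced in the proof of Lemma~\ref{dim(Z/G)_le_dim(Z)} have images $\pr(B_{\epsilon(z)}(z))$ that form an open cover $\overline{\calu}$ of $G\backslash X$. Because Lemma~\ref{dim(Z/G)_le_dim(Z)} gives $\topdim(G\backslash X) \le n$, the cover $\overline{\calu}$ admits an open refinement $\overline{\calv}$ with $\dim(\overline{\calv}) \le n$. For each $\overline{V}\in\overline{\calv}$, pick a ball $B_{\epsilon(z)}(z)$ whose image contains $\overline{V}$ and set $W_{\overline{V}} := B_{\epsilon(z)}(z)\cap\pr^{-1}(\overline{V})$. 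Using the homeomorphism $G/G_z \times B_{\epsilon(z)}(z) \xrightarrow{\cong} G\cdot B_{\epsilon(z)}(z)$ from that same proof, one checks that $W_{\overline{V}}$ is a $\calfin$-subset of $X$ with $\pr(W_{\overline{V}})=\overline{V}$ and that distinct $G$-translates $g\cdot W_{\overline{V}}$ are disjoint. Put $\calu := \{g\cdot W_{\overline{V}} : g\in G,\; \overline{V}\in\overline{\calv}\}$. This is a $G$-invariant open cover of $X$ by $\calfin$-subsets, and its covering dimension is bounded by $\dim(\overline{\calv})\le n$, because pairwise distinct members of $\calu$ sharing a point must project to pairwise distinct members of $\overline{\calv}$ sharing the projected point (two distinct translates of the same $W_{\overline{V}}$ through a common point would have to coincide by the $\calfin$-property). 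Next, since $G\backslash X$ is metrizable and hence paracompact, I would fix a partition of unity $\{\overline{\phi}_{\overline{V}}\}$ subordinate to $\overline{\calv}$ and define $\phi_U(x) := \overline{\phi}_{\overline{V}}(\pr(x))$ for $x \in U = g\cdot W_{\overline{V}}$ and $\phi_U(x) := 0$ otherwise; the disjointness of the translates makes $\phi_U$ continuous, enforces $\sum_{U\in\calu}\phi_U \equiv 1$, and yields $\phi_{g'\cdot U}(g'\cdot x) = \phi_U(x)$ for all $g'\in G$. Then $Y := \caln(\calu)'$ is a proper $G$-$CW$-complex of dimension at most $n$, and the canonical nerve map $f\colon X \to Y$, $f(x) := \sum_{U\in\calu}\phi_U(x)\cdot[U]$, interpreted as a point in the simplex spanned by $\{U : x\in U\}$, is continuous and $G$-equivariant.

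The main obstacle is producing the cover $\calu$ in the first place: a naive pullback of a refinement of $G\backslash X$ loses $G$-invariance, whereas $G$-symmetrizing a refinement of $X$ typically destroys the dimension bound. The $\calfin$-property of the balls $B_{\epsilon(z)}(z)$ is precisely what forces distinct translates $g\cdot W_{\overline{V}}$ meeting at a common point to coincide, and this single fact simultaneously rescues both the covering dimension count for $\calu$ and the well-definedness of the pulled-back partition of unity.
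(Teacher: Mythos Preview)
Your proposal is correct and follows essentially the same strategy as the paper: pass to the quotient, use Lemma~\ref{dim(Z/G)_le_dim(Z)} to refine to covering dimension $\le n$, lift each refined set back to $X$ by intersecting its preimage with a single $\calfin$-ball (so that distinct $G$-translates are disjoint), and map to the nerve via an equivariant partition of unity. The only noteworthy difference is in how the partition of unity is built: the paper multiplies the pulled-back functions $e_U\circ\pr$ by an explicit radial cutoff $\chi\bigl(d_X(\,\cdot\,,gx(U))/\epsilon(x(U))\bigr)$ and then renormalizes, whereas you exploit directly that each $g\cdot W_{\overline V}$ is clopen in $\pr^{-1}(\overline V)$ so that the piecewise definition $\phi_U=(\overline{\phi}_{\overline V}\circ\pr)\cdot \mathbf{1}_{U}$ is already continuous and the lifted functions sum to $1$ without renormalization; this is a mild simplification, not a different idea. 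One point you leave implicit but should make explicit is local finiteness of $\{\phi_U\}$ (needed for continuity into the nerve with the weak topology): take the partition of unity on $G\backslash X$ locally finite, and use properness of $X$ together with properness of the action to see that any point has a relatively compact neighborhood meeting only finitely many translates $g\cdot W_{\overline V}$ for each of the finitely many relevant $\overline V$---exactly the argument the paper carries out for its set $J_W$.
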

\begin{proof}
  Since the $G$-action is proper we can find for every $x \in X$ an
  $\epsilon(x) > 0$ such that for every $g \in G$ we have
  \begin{multline*}
    g \cdot \overline{B}_{2\epsilon(x)}(x) \cap
    \overline{B}_{2\epsilon(x)}(x)\not= \emptyset \Leftrightarrow g
    \cdot \overline{B}_{2\epsilon(x)}(x) = \overline{B}_{2\epsilon(x)}
    (x)
    \\
    \Leftrightarrow g \cdot B_{2\epsilon(x)}(x) = B_{2\epsilon(x)}(x)
    \Leftrightarrow g \cdot B_{\epsilon(x)}(x) = B_{\epsilon(x)}(x)
    \Leftrightarrow g \in G_x.
  \end{multline*}
  We can arrange that $\epsilon(gx) = \epsilon(x)$ for $g \in G$ and
  $x \in X$ holds.  We obtain a covering of $X$ by open
  $\calfin$-subsets $\bigl\{B_{\epsilon(x)}(x) \mid x \in X\bigr\}$.
  Let $\pr \colon X \to G\backslash X$ be the canonical projection.
  We obtain an open covering of $G\backslash X$ by
  $\bigl\{\pr\bigl(B_{\epsilon(x)}(x)\bigr) \mid x \in X\bigr\}$.
  Since $\topdim(X) \le n$ by assumption and $G$ acts properly on $X$,
  we get $\topdim(G\backslash X) \le n$ from
  Lemma~\ref{dim(Z/G)_le_dim(Z)}.  Since $G$ acts properly and
  isometrically on $X$, the quotient $G\backslash X$ inherits a metric
  from $X$. Hence $G\backslash X$ is paracompact by Stone's theorem
  (see~\cite[Theorem~4.3 in Chap.~6.3 on page~256]{Munkres(1975)}) and
  in particular normal.  By~\cite[Theorem~3.5 on
  page~211]{Dowker(1947map)} we can find a locally finite open
  covering $\calu$ of $G\backslash X$ such that $\dim(\calu) \le n$
  and $\calu$ is a refinement of 
  $\bigl\{\pr(B_{\epsilon(x)}(x)) \mid   x \in X\bigr\}$.  
  For each $U \in \calu$ choose $x(U) \in X$ with 
  $U \subseteq \pr\bigl(B_{\epsilon(U)}(x(U)\bigr)$.  Define the index
  set
$$J = \bigl\{(U,\overline{g}) \mid U \in \calu, \overline{g} \in G/G_{x(U)}\bigr\}.$$
For $(U,\overline{g}) \in J$ define an open $\calfin$-subset of $X$ by
$$V_{U,\overline{g}} := \pr^{-1}(U) \cap g \cdot B_{2\epsilon(x(U))}\bigl(x(U)\bigr).$$
Obviously this is well-defined, i.e., the choice of $g \in \overline{g}$ 
does not matter, and we have $\pr(V_{U,\overline{g}}) \subseteq U$ 
and $V_{U,\overline{g}} \subseteq g \cdot B_{2\epsilon(x(U))}\bigl(x(U)\bigr)$.

Consider the collection of subsets of $X$
$$\calv = \bigl\{V_{U,\overline{g}} \mid (U,\overline{g}) \in J\}.$$
This is a $G$-invariant covering of $X$ by open $\calfin$-subsets.
Its dimension satisfies
$$\dim(\calv) \le \dim(\calu) \le n$$
since for $U \in \calu$, $\overline{g_1}, \overline{g_2} \in
G/G_{x(U)}$ we have
$$V_{U,\overline{g_1}} \cap V_{U,\overline{g_2}} \not= \emptyset 
\implies g_1 \cdot B_{2\epsilon(x(U))}\bigl(x(U)\bigr) \cap g_2 \cdot
B_{2\epsilon(x(U))}\bigl(x(U)\bigr) \implies \overline{g_1} =
\overline{g_2}.$$

Since $\calu$ is locally finite and $G\backslash X$ is paracompact, we
can find a locally finite partition of unity $\bigl\{e_U \colon
G\backslash X \to [0,1] \mid U \in \calu\bigr\}$ which is subordinate
to $\calu$, i.e., $\sum_{U \in \calu} e_U = 1$ and $\supp(e_U) \subset U$ 
for every $U \in \calu$.  Fix a map $\chi \colon [0,\infty) \to [0,1]$ 
satisfying $\chi^{-1}(0) = [1,\infty)$.  Define for
$(U,\overline{g}) \in J$ a function
$$\phi_{U,\overline{g}} \colon X \to [0,1], \quad 
y \mapsto e_U(\pr(y)) \cdot
\chi\bigl(d_X(y,gx(U))/\epsilon(x(U))\bigr).$$

Consider $y \in X$. Since $\calu$ is locally finite and $G\backslash X$ is locally compact, 
we can find an open neighborhood $T$ of $\pr(y)$ such that $\overline{T}$ meets only finitely
many elements of $\calu$.  Choose an open neighborhood $W_0$ of $y$
such that $\overline{W_0}$ is compact.  Define an open neighborhood of
$y$ by
$$W := W_0 \cap \pr^{-1}(T).$$
Since $\overline{W_0}$ is compact, $\overline{W}$ is compact.  Since
$G$ acts properly, there exists for a given $U \in \calu$ only
finitely many elements $g \in G$ with $\overline{W} \cap g \cdot
B_{\epsilon(x(U))}(x(U)) \not= \emptyset$.  Since $\overline{T}$ meets only
finitely elements of $\calu$, the set
$$J_W := \bigl\{(U,\overline{g}) \in J\mid 
\overline{W} \cap g\cdot B_{\epsilon(x(U))}(x(U)) \cap \pr^{-1}(U)
\not= \emptyset\bigr\}$$ is finite. Suppose $\phi_{U,\overline{g}}(z) > 0$ 
for $(U,\overline{g}) \in J$ and $z \in W$.  We conclude 
$z \in \pr^{-1}(U) \cap g \cdot B_{\epsilon(x(U))}(x(U))$ and hence
$(U,\overline{g}) \in J_W$. Thus we have shown that the collection
$\bigl\{\phi_{U,\overline{g}} \mid (U,\overline{g}) \in J\bigr\}$ is
locally finite.

We conclude that the map
$$\sum_{(U,\overline{g}) \in J} \phi_{U,\overline{g}} \colon X \to [0,1], \quad 
y \mapsto \sum_{(U,\overline{g}) \in J} e_U(\pr(y)) \cdot
\chi\bigl(d_X(y,gx(U))/\epsilon(x(U))\bigr)$$ is well-defined and
continuous.  It has always a value greater than zero since for every
$y \in X$ there exists $U \in \calu$ with $e_U(\pr(y)) > 0$, the set
$\pr^{-1}(U)$ is contained in $\bigcup_{g \in G} g \cdot B_{\epsilon(U)}(x(U))$
and $\chi^{-1}(0) = [1,\infty)$.  Define for $(U,\overline{g}) \in J$
a map
$$\psi_{U,\overline{g}} \colon X \to [0,1], \quad 
y \mapsto \frac{\phi_{U,\overline{g}}(y)}{\sum_{(U,\overline{g}) \in
    J} \phi_{U,\overline{g}}(y)}.$$ 
We conclude that
$$\begin{array}{lcll}
  \sum_{(U,\overline{g}) \in J} \psi_{U,\overline{g}}(y) 
  & = & 1 & \quad \text{for}\; y \in X;
  \\
  \psi_{U,\overline{g}}(hy) 
   & = & \psi_{U,\overline{h^{-1}g}}(y) &
  \quad \text{for}\; h \in G, y \in Y  \; \text{and} \; (U,\overline{g}) \in J;
  \\
  \supp(\psi_{U,\overline{g}}) &\subseteq & V_{U,\overline{g}} &
  \quad \text{for} \; (U,\overline{g}) \in J,
\end{array}
$$
and the collection $\bigl\{\psi_{U,\overline{g}} \mid (U,\overline{g})
\in J\bigr\}$ is locally finite.  Define the desired proper
$n$-dimensional $G$-$CW$-complex to be the nerve $Y := \caln(\calv)$.
Define a map by
$$f \colon X \to \caln(\calv), \quad y \mapsto \sum_{(U,\overline{g}) \in J}
\psi_{U,\overline{g}}(y) \cdot V_{U,\overline{g}}.$$ It is
well-defined since for $y \in X$ the simplices $V_{U,\overline{g}}$
for which $\psi_{U,\overline{g}}(y) \not= 0$ holds span a simplex
because $y \in X$ with $\psi_{U,\overline{g}}(y) \not= 0$ belongs to
$V_{U,\overline{g}}$ and hence the intersection of the sets
$V_{U,\overline{g}}$ for which $\psi_{U,\overline{g}}(y) \not= 0$
holds contains $y$ and hence is non-empty. The map $f$ is continuous
since $\bigl\{\psi_{U,\overline{g}} \mid (U,\overline{g}) \in
J\bigr\}$ is locally finite.  It is $G$-equivariant by the following
calculation for $h \in G$ and $y \in Y$:
\begin{eqnarray*}
  f(hy) 
  & = & 
  \sum_{(U,\overline{g}) \in J} \psi_{U,\overline{g}}(hy) \cdot V_{U,\overline{g}}
  \\
  & = & 
  \sum_{(U,\overline{g}) \in J} \psi_{U,\overline{hg}}(hy) \cdot V_{U,\overline{hg}}
  \\
  & = & 
  \sum_{(U,\overline{g}) \in J} \psi_{U,\overline{h^{-1}hg}}(y) \cdot V_{U,\overline{hg}}
  \\
  & = & 
  \sum_{(U,\overline{g}) \in J} \psi_{U,\overline{g}}(y) \cdot h \cdot V_{U,\overline{g}}
  \\
  & = & 
  h \cdot \sum_{(U,\overline{g}) \in J} \psi_{U,\overline{g}}(y) \cdot  V_{U,\overline{g}}
  \\
  & = & 
  h \cdot f(y).
\end{eqnarray*}
\end{proof}

\begin{lemma} \label{lem:domination_and_dimension} Let $X$ and $Y$ be
  $G$-$CW$-complexes.  Let $i \colon X \to Y$ and $r \colon Y \to X$
  be $G$-maps such that $r \circ i$ is $G$-homotopic to the
  identity. Consider an integer $d \ge 3$.  Suppose that $Y$ has
  dimension $\le d$.

  Then $X$ is $G$-homotopy equivalent to a $G$-$CW$-complex $Z$ of
  dimension $\le d$.
\end{lemma}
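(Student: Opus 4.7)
The plan is to pass to the algebraic shadow of the problem by considering the Bredon cellular chain complex over the orbit category $\Or(G)$, use the domination to trim its length to $\le d$, and then realize the trimmed complex geometrically via equivariant obstruction theory; the assumption $d \ge 3$ will provide exactly the room needed for the cell-trading step.

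First I consider the cellular $\IZ\Or(G)$-chain complex $C_*^{\Or(G)}(-)$ attached to a $G$-$CW$-complex. Applied to $Y$ it is a free $\IZ\Or(G)$-chain complex concentrated in degrees $\le d$. The hypothesis that $r \circ i$ is $G$-homotopic to $\id_X$ implies that $C_*^{\Or(G)}(X)$ is a retract up to chain homotopy of $C_*^{\Or(G)}(Y)$. Hence $C_*^{\Or(G)}(X)$ is $\IZ\Or(G)$-chain homotopy equivalent to a chain complex $P_*$ of projective $\IZ\Or(G)$-modules with $P_k = 0$ for $k > d$.

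Next, I build $Z$ skeleton by skeleton together with a reference $G$-map $Z \to X$ realizing $P_*$ on cellular chain complexes. In low dimensions one realizes the equivariant $2$-type of $X$ directly, using that the low-degree part of $P_*$ matches that of $C_*^{\Or(G)}(X)$. For $k = 3, 4, \ldots, d$ one attaches equivariant $k$-cells corresponding to generators of $P_k$, using the differentials in $P_*$ as chain-level data for the attaching maps; these can be realized geometrically because the equivariant Hurewicz theorem identifies $\pi_k^H$ of the attaching sphere with the corresponding Bredon homology module as soon as $k \ge 3$. Projectivity (rather than freeness) of the $P_k$ is handled by the standard trick of writing $P_k \oplus Q_k$ as free, realizing both summands geometrically, and then cancelling $Q_k$ against a matching free summand introduced in dimension $k+1$. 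This cancellation requires two consecutive dimensions inside $[3,d]$, which is exactly what the hypothesis $d \ge 3$ provides. By construction $Z$ has dimension $\le d$, and the $G$-map $Z \to X$ induces a $\IZ\Or(G)$-chain homotopy equivalence on Bredon cellular chain complexes as well as an isomorphism on $\pi_1^H$ for every subgroup $H$.

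Finally, the equivariant Whitehead theorem applied to $Z \to X$ (combined with the equivariant Hurewicz theorem, translating the chain-level equivalence into isomorphisms on $\pi_k^H$ for all $k$ and all isotropy $H$) yields the desired $G$-homotopy equivalence $Z \simeq_G X$. The main obstacle is the cell-trading step in the middle paragraph: performing the swap of a projective module for a free one at the geometric level, coherently across the orbit category, requires careful bookkeeping of attaching maps, but all needed tools (equivariant Hurewicz, equivariant Whitehead, and Swan-type stabilization of projectives) are classical in the setting of $G$-$CW$-complexes.
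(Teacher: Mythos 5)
Your overall strategy---algebraize via cellular chain complexes, use the domination to compress to a complex concentrated in degrees $\le d$, then realize geometrically with $d \ge 3$ giving the room for cell trading---is exactly the strategy of the paper, which delegates the compression to \cite[Proposition~11.10]{Lueck(1989)} and the realization to \cite[Proposition~14.9]{Lueck(1989)}. However, there is a genuine gap in the category you work over. You take Bredon cellular chains over the orbit category $\Or(G)$, i.e.\ the functor $G/H \mapsto C_*(Y^H)$. For a general $G$-$CW$-complex $X$ the fixed point sets $X^H$ need not be simply connected (the lemma is stated without any such hypothesis), and then the $\IZ\Or(G)$-chain complex does not determine the $G$-homotopy type: your appeal to the equivariant Hurewicz theorem to identify $\pi_k^H$ of a skeletal pair with a Bredon homology module, and your final appeal to the equivariant Whitehead theorem, both require control over $\pi_1(X^H)$ that Bredon homology over $\Or(G)$ simply does not see. (Already non-equivariantly, a homology equivalence between non-simply-connected complexes need not be a homotopy equivalence.) Your remark that the map $Z \to X$ "induces an isomorphism on $\pi_1^H$" is asserted but not delivered by the chain-level argument. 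The correct setting is the fundamental category $\Pi(G,X)$, whose cellular chain complex records the universal covers of the components of all fixed point sets; this is what the paper uses.

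Repairing this forces a second, purely geometric step that is missing from your proposal and is the other place where $d \ge 3$ enters. To transport the geometric domination $(Y,i,r)$ to a chain-level domination over $\Pi(G,X)$, one needs $r$ and $i$ to induce equivalences of fundamental categories, which is not automatic from $r \circ i \simeq_G \id_X$ alone. The paper arranges this by replacing $Y$ with the union $Z$ of $Y$ and the $2$-skeleton of the mapping cylinder $\cyl(r)$: then $\cyl(r)$ is obtained from $Z$ by attaching equivariant cells of dimension $\ge 3$, so the restricted projection $Z \to X$ is $2$-connected on every fixed point set, and $\dim(Z) \le d$ precisely because $d \ge 3 \ge 2$. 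With that preliminary modification your outline goes through (over $\Pi(G,X)$ rather than $\Or(G)$), and the remaining realization and cancellation steps you sketch are indeed the content of the cited propositions. As stated, though, your proof would only establish the lemma under the additional hypothesis that all fixed point sets of $X$ are simply connected.
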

\begin{proof}
  By the Equivariant Cellular Approximation Theorem
  (see~\cite[Theorem~II.2.1 on page~104]{Dieck(1987)}) we can assume
  without loss of generality that $i$ and $r$ are cellular. Let
  $\cyl(r)$ be the mapping cylinder. Let $k \colon Y \to \cyl(r)$ be
  the canonical inclusion and $p \colon \cyl(r) \to X$ be the
  canonical projection. Then $p$ is a $G$-homotopy equivalence and $p
  \circ k = r$.  Let $Z$ be the union of the $2$-skeleton of $\cyl(r)$
  and $Y$. This is a $G$-$CW$-subcomplex of $\cyl(r)$ and $\cyl(r)$ is
  obtained from $Z$ by attaching equivariant cells of dimension $\ge
  3$.  Hence the map $p|_Z \colon Z \to X$ has the property that it
  induces on every fixed point set a $2$-connected map. Let $j \colon
  X \to Z$ be the composite of $i \colon X \to Y$ with the obvious
  inclusion $Y \to Z$. Then $p|_Z \circ j = p \circ k \circ i = r
  \circ i$ is $G$-homotopy equivalent to the identity and the
  dimension of $Z$ is still bounded by $d$ since we assume $d \ge
  3$. Hence we can assume in the sequel that $r^H \colon Y^H \to X^H$
  is $2$-connected for all $H \subseteq G$, otherwise replace $Y$ by
  $Z$, $i$ by $j$ and $r$ by $p|_Z$.

  We want to apply~\cite[Proposition~14.9 on page~282]{Lueck(1989)}.
  Here the assumption $d \ge 3$ enters. Hence it suffices to show that
  the cellular $\IZ\Pi(G,X)$-chain complex $C_*^c(X)$ is
  $\IZ\Pi(G,X)$-chain homotopy equivalent to a $d$-dimensional
  $\IZ\Pi(G,X)$-chain complex. By~\cite[Proposition~11.10 on
  page~221]{Lueck(1989)} it suffices to show that the cellular
  $\IZ\Pi(G,X)$-chain complex $C_*^c(X)$ is dominated by a
  $d$-dimensional $\IZ\Pi(G,X)$-chain complex. This follows from the
  geometric domination $(Y,i,r)$ by passing to the cellular chain
  complexes over the fundamental categories since $r$ and hence also
  $i$ induce equivalences between the fundamental categories because
  $r^H \colon Y^H \to X^H$ is $2$-connected for all $H \subseteq G$
  and $r \circ i \simeq_G \id_X$.

\end{proof}

The condition $d \ge 3$ is needed since we want to argue first with
the cellular $\IZ\Or(G)$-chain complex and then transfer the statement
that it is $d$-dimensional to the statement that the underlying
$G$-$CW$-complex is $d$-dimensional. The condition $d \ge 3$ enters
for analogous reasons in the classical proof of the theorem that the
existence of a $d$-dimensional $\IZ G$-projective resolution for the
trivial $\IZ G$-module $\IZ$ implies the existence of a
$d$-dimensional model for $BG$ (see~\cite[Theorem~7.1 in
Chapter~VIII.7 on page~205]{Brown(1982)}).

\begin{theorem} \label{the:d-dimensional_model_for_eub(G)_from_jub(G)}
  Let $G$ be a discrete group. Then

  \begin{enumerate}

  \item \label{the:d-dimensional_model_for_eub(G)_from_jub(G):G-homotopy_equivalence}
    There is a $G$-homotopy equivalence $\jub{G} \to \eub{G}$;

  \item \label{the:d-dimensional_model_for_eub(G)_from_jub(G):dimension_from_J_to_E}
    Suppose that there is a model for $\jub{G}$ which is a metric
    space such that the action of $G$ on $\jub{G}$ is
    isometric. Consider an integer $d$ with $d = 1$ or $d \ge 3$.
    Suppose that the topological dimension $\topdim(\jub{G}) \le d$.

    Then there is a $G$-$CW$-model for $\eub{G}$ of dimension $\le d$;

  \item \label{the:d-dimensional_model_for_eub(G)_from_jub(G):dimension_from_E_to_J}
    Let $d$ be an integer $d \ge 0$. Suppose that there is a
    $G$-$CW$-model for $\eub{G}$ with $\dim(\eub{G}) \le d$ such that
    $\eub{G}$ after forgetting the group action has countably many
    cells.
 
    Then there exists a model for $\jub{G}$ with $\topdim(\jub{G}) \le
    d$.

  \end{enumerate}
\end{theorem}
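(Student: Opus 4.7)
\emph{Part~\ref{the:d-dimensional_model_for_eub(G)_from_jub(G):G-homotopy_equivalence}.}
The plan is to use the universal properties of both classifying spaces. First, any proper $G$-$CW$-complex, including $\eub{G}$, is a numerable proper $G$-space (via a standard partition of unity subordinate to the orbits), so the defining $G$-homotopy condition for $\jub{G}$ is already witnessed by $\eub{G}$ itself. Second, any numerable proper $G$-space has the $G$-homotopy type of a proper $G$-$CW$-complex, so in particular $\jub{G}$ admits a $G$-map to $\eub{G}$. Applying uniqueness up to $G$-homotopy in both directions produces mutually inverse $G$-homotopy equivalences $\jub{G} \simeq_G \eub{G}$.

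\emph{Part~\ref{the:d-dimensional_model_for_eub(G)_from_jub(G):dimension_from_J_to_E}.}
Setting $X := \jub{G}$, the hypotheses of Lemma~\ref{lem:map_to:G-CW_complex} are satisfied, yielding a proper $d$-dimensional $G$-$CW$-complex $Y$ together with a $G$-map $f \colon \jub{G} \to Y$. Since $Y$ is itself a proper $G$-$CW$-complex, the universal property of $\eub{G}$ provides a $G$-map $u \colon Y \to \eub{G}$. Let $s \colon \eub{G} \to \jub{G}$ be a $G$-homotopy equivalence from~(i). Then the self-map $u \circ f \circ s$ of $\eub{G}$ is $G$-homotopic to the identity by uniqueness, so $(Y, f \circ s, u)$ is a $G$-domination of $\eub{G}$ by a $d$-dimensional $G$-$CW$-complex. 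For $d \ge 3$, Lemma~\ref{lem:domination_and_dimension} directly produces the required $d$-dimensional $G$-$CW$-model for $\eub{G}$. For $d = 1$, the $1$-dimensional $G$-domination forces the Bredon cohomological dimension $\underline{\cd}_{\IZ}(G) \le 1$, and Dunwoody's equivariant Stallings--Swan theorem then yields a $1$-dimensional model for $\eub{G}$.

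\emph{Part~\ref{the:d-dimensional_model_for_eub(G)_from_jub(G):dimension_from_E_to_J}.}
Starting from a $G$-$CW$-model for $\eub{G}$ of dimension $\le d$ with countably many cells, first replace it up to $G$-homotopy equivalence by a $G$-invariant simplicial complex $K$ of simplicial dimension $\le d$, using inductive equivariant simplicial approximation on the skeleta of the given $G$-$CW$-structure. Equip $K$ with a $G$-invariant barycentric path metric; since $G$ acts simplicially, it acts by isometries. The countability of the cell structure makes $K$ a separable metric space, so classical dimension theory for separable metric spaces gives $\topdim(K) \le \dim(K) \le d$. Finally, $K \simeq_G \eub{G} \simeq_G \jub{G}$ as $G$-spaces by~(i), so the two projections $K \times K \to K$ are $G$-homotopic, which is exactly the condition for $K$ to be a model for $\jub{G}$.

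\emph{Main obstacle.} The most delicate step is the metrization in Part~\ref{the:d-dimensional_model_for_eub(G)_from_jub(G):dimension_from_E_to_J}: one needs a $G$-invariant proper metric on a potentially non-locally-finite simplicial $G$-complex whose topological dimension matches the combinatorial dimension. The countable-cells hypothesis is precisely what enables the use of separable metric dimension theory to convert the simplicial dimension bound into a topological dimension bound.
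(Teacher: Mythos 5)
Your overall strategy coincides with the paper's: part~(i) via the two universal properties, part~(ii) by feeding $\jub{G}$ into Lemma~\ref{lem:map_to:G-CW_complex} to obtain a $d$-dimensional proper $G$-$CW$-domination of $\eub{G}$ and then invoking Lemma~\ref{lem:domination_and_dimension} for $d \ge 3$ and Dunwoody for $d = 1$, and part~(iii) by equivariant simplicial approximation to a simplicial $G$-complex of dimension $\le d$ with countably many simplices. Two points diverge. First, in the case $d = 1$ the paper does not pass through Bredon cohomology: it checks directly from the chain-level domination that $\Ext^n_{\IQ G}(\IQ,M) = 0$ for $n \ge 2$, i.e.\ $\cd_{\IQ}(G) \le 1$, which is literally the hypothesis of Dunwoody's theorem; your appeal to ``Bredon cohomological dimension $\le 1$ plus an equivariant Stallings--Swan theorem'' reaches the same conclusion but rests on a stronger citation that you would need to source precisely. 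Second, and more substantively, in part~(iii) the paper stays with the weak ($CW$) topology on the simplicial complex (after barycentric subdivision, to get a $G$-$CW$-structure) and uses that a countable simplicial complex in the weak topology has topological dimension equal to its simplicial dimension; you instead metrize. This is where your argument needs a patch: for a complex that is not locally finite the barycentric metric topology is strictly coarser than the weak topology, so the $G$-homotopy equivalence $K \simeq_G \eub{G}$ furnished by simplicial approximation (which lives in the weak topology) does not automatically transfer to the metrized $K$; one needs an equivariant Dowker-type comparison of the two topologies, or one simply avoids the issue as the paper does. Relatedly, no properness of the metric is required for a model of $\jub{G}$ (only properness of the $G$-action), so the ``proper metric'' difficulty you flag at the end is not an actual obstacle.
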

\begin{proof}~\ref{the:d-dimensional_model_for_eub(G)_from_jub(G):G-homotopy_equivalence}
  This is proved in~\cite[Lemma~3.3 on page 278]{Lueck(2005s)}.
  \\[1mm]~\ref{the:d-dimensional_model_for_eub(G)_from_jub(G):dimension_from_J_to_E}
  Choose a $G$-homotopy equivalence $i \colon \eub{G} \to \jub{G}$.
  From Lemma~\ref{lem:map_to:G-CW_complex} we obtain a $G$-map 
  $f \colon \jub{G} \to Y$ to a proper $G$-$CW$-complex of dimension 
  $\le d$. By the universal properly of $\eub{G}$ we can find a $G$-map 
  $h \colon Y \to \eub{G}$ and the composite $h \circ f \circ i$ is
  $G$-homotopic to the identity on $\eub{G}$.

  Suppose $d \ge 3$. We conclude from
  Lemma~\ref{lem:domination_and_dimension} that $\eub{G}$ is
  $G$-homotopy equivalent to a $G$-$CW$-complex of dimension $\le d$.

  Suppose $d = 1$. By Dunwoody~\cite[Theorem~1.1]{Dunwoody(1979)} it
  suffices to show that the rational cohomological dimension of $G$
  satisfies $\cd_{\IQ}(G)\le 1$. Hence we have to show for any $\IQ
  G$-module $M$ that $\Ext^n_{\IQ G}(\IQ,M\bigr) = 0$ for $n \ge 2$,
  where $\IQ$ is the trivial $\IQ G$-module.  Since all isotropy
  groups of $\eub{G}$ and $Y$ are finite, their cellular $\IQ G$-chain
  complexes are projective. Since $\eub{G}$ is contractible,
  $C_*(\eub{G};\IQ)$ is a projective $\IQ G$-resolution and hence
$$\Ext^n_{\IQ G}(\IQ,M\bigr) \cong H^n\bigl(\hom_{\IQ G}(C_*(\eub{G};\IQ),M)\bigr).$$
Since $h \circ f \circ i \simeq_G \id_{\eub{G}}$, the $\IQ$-module
$H^n\bigl(\hom_{\IQ G}(C_*(\eub{G};\IQ),M)\bigr)$ is a direct summand
in the $\IQ$-module $H^n\bigl(\hom_{\IQ G}(C_*(Y;\IQ),M)\bigr)$.
Since $Y$ is $1$-dimensional by assumption, 
$H^n\bigl(\hom_{\IQ G}(C_*(Y;\IQ),M)\bigr)$ vanishes for $n \ge 2$. This implies that
$\Ext^n_{\IQ G}(\IQ,M\bigr)$ vanishes for $n \ge 2$.  \\[1mm]%
\ref{the:d-dimensional_model_for_eub(G)_from_jub(G):dimension_from_E_to_J}
Using the equivariant version of the simplicial approximation theorem
and the fact that changing the $G$-homotopy class of attaching maps
does not change the $G$-homotopy type, one can find a simplicial
complex $X$ with simplicial $G$-action which is $G$-homotopy
equivalent to $\eub{G}$, satisfies $\dim(X) = \dim(\eub{G})$ and has
only countably many simplices.  Hence the barycentric subdivision $X'$
is a simplicial complex of dimension $\le d$ with countably many
simplices and carries a $G$-$CW$-structure.  The latter implies that
$X'$ is a $G$-$CW$-model for $\eub{G}$ and hence also a model for
$\jub{G}$.  Since the dimension of a simplicial complex with countably
many simplices is equal to its topological dimension, we conclude
$\topdim(X') = \dim(X) = \dim(\eub{G}) \le d$.
\end{proof}


\typeout{--------------------   Section 3   --------------------------}

\section{The passage from finite to virtually cyclic groups}
\label{sec:The_passage_from_finite_to_virtually_cyclic_groups}

In~\cite{Lueck-Weiermann(2007)} it is described how one can construct
$\edub{G}$ from $\eub{G}$.  In this section we want to make this
description more explicit under the following condition

\begin{condition} \label{cond:(C)} We say that $G$ satisfies condition
  (C) if for every $g,h \in G$ with $|h| = \infty$ and $k,l \in \IZ$
  we have
$$gh^kg^{-1} = h^l \implies |k| = |l|.$$
\end{condition}

Let $\calicyc$ be the set of infinite cyclic subgroup $C$ of $G$.
This is not a family since it does not contain the trivial subgroup.
We call $C,D \in \calicyc$ \emph{equivalent} if $|C \cap D| =
\infty$. One easily checks that this is an equivalence relation on
$\calicyc$. Denote by $[\calicyc]$ the set of equivalence classes and
for $C \in \calicyc$ by $[C]$ its equivalence class. Denote by
$$N_GC := \{g \in G \mid gCg^{-1} = C\}$$
the \emph{normalizer} of $C$ in $G$. Define for $[C] \in [\calicyc]$ a
subgroup of $G$ by
$$N_G[C] := \bigl\{g \in G \;\big|\; |gCg^{-1} \cap C| = \infty\bigr\}.$$
One easily checks that this is independent of the choice of $C \in
[C]$.  Actually $N_G[C]$ is the isotropy of $[C]$ under the action of
$G$ induced on $[\calicyc]$ by the conjugation action of $G$ on
$\calicyc$.

\begin{lemma}
  \label{lem:N_G[C]_is_colimC_Gk!C}
  Suppose that $G$ satisfies Condition (C) (see~\ref{cond:(C)}).
  Consider $C \in \calicyc$.

  Then obtain a nested sequence of subgroups
$$N_GC \subseteq N_G2!C \subseteq N_G3!C \subseteq N_G4!C \subseteq \cdots $$
where $k!C$ is the subgroup of $C$ given by $\{h^{k!} \mid h \in C\}$,
and we have
$$N_G[C] = \bigcup_{k \ge 1} N_Gk!C.$$
\end{lemma}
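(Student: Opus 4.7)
The plan is to fix a generator $h$ of $C$ so that $k!C = \langle h^{k!}\rangle$ and translate everything into exponent arithmetic, where Condition~(C) becomes the decisive ingredient.

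For the nested sequence of subgroups, I would first observe that whenever $g$ lies in $N_G(k!C)$, the conjugate $gh^{k!}g^{-1}$ is a power of $h$ belonging to $k!C$, so it has the form $h^{m \cdot k!}$; Condition~(C) forces $|m\cdot k!| = |k!|$ and hence $m = \pm 1$. Thus $gh^{k!}g^{-1} = h^{\pm k!}$. Raising to the $(k+1)$st power gives $gh^{(k+1)!}g^{-1} = h^{\pm (k+1)!}$, a generator of $(k+1)!C$, so $g \in N_G((k+1)!C)$. The same computation shows that $N_G(k!C)$ actually coincides with the stabilizer of $k!C$ (not merely the set of $g$ with $g\cdot k!C \cdot g^{-1} \subseteq k!C$), which will be needed below.

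For the equality $N_G[C] = \bigcup_{k\ge 1} N_G(k!C)$, the inclusion $\supseteq$ is immediate: if $g$ normalizes $k!C$, then $gCg^{-1}\cap C \supseteq k!C$, and $k!C$ is infinite. For the inclusion $\subseteq$, take $g \in N_G[C]$ and set $D := gCg^{-1} \cap C$, which is an infinite subgroup of both of the infinite cyclic groups $C$ and $gCg^{-1}$. Write it simultaneously as $D = \langle h^n\rangle$ (as a subgroup of $C$) and as $D = \langle (ghg^{-1})^m\rangle = \langle gh^m g^{-1}\rangle$ (as a subgroup of $gCg^{-1}$), with $n,m\ge 1$. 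Since these two expressions must have the same generator up to sign, $gh^m g^{-1} = h^{\pm n}$, and now Condition~(C) forces $m = n$. Hence $gh^n g^{-1} = h^{\pm n}$. Choosing any $k$ with $n \mid k!$ (e.g.\ $k \ge n$), raising to the power $k!/n$ yields $gh^{k!}g^{-1} = h^{\pm k!}$, so $g$ sends a generator of $k!C$ to a generator of $k!C$, proving $g \in N_G(k!C)$.

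The only subtle point, which I expect to be the main place to exercise care, is the identification of the two sets of generators of $D$: one must verify that the intersection of two infinite cyclic subgroups sitting inside an ambient group is itself infinite cyclic (immediate, since it is an infinite subgroup of an infinite cyclic group) and that the generator written from each side must agree up to inversion. Once that is in hand, Condition~(C) does all the work to equate the two exponents, and the remainder of the argument is bookkeeping with factorials.
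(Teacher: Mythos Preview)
Your argument is correct and follows essentially the same route as the paper. The one notable difference is in the nested-sequence step: the paper simply observes that every subgroup of an infinite cyclic group is characteristic, so $N_G(k!C) \subseteq N_G((k+1)!C)$ follows immediately from $(k+1)!C \subseteq k!C$ without invoking Condition~(C) at all. Your derivation via Condition~(C) is perfectly valid too, but the characteristic-subgroup remark is shorter and shows that this part of the lemma holds in any group. For the equality $N_G[C] = \bigcup_{k\ge 1} N_G(k!C)$, your argument and the paper's are the same in substance: both produce a relation $gh^mg^{-1} = h^{\pm n}$, apply Condition~(C) to get $m=n$, and then pass to a suitable $k!C$ (the paper via the characteristic-subgroup observation again, you by raising to the $(k!/n)$th power).
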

\begin{proof}
  Since every subgroup of a cyclic group is characteristic, we obtain
  the nested sequence of normalizers $N_GC \subseteq N_G2!C \subseteq
  N_G3!C \subseteq N_G4!C \subseteq \cdots $.

  Consider $g \in N_G[C]$. Let $h$ be a generator of $C$.  Then there
  are $k,l \in \IZ$ with $gh^kg^{-1} = h^l$ and $k,l \not=
  0$. Condition (C) implies $k = \pm l$. Hence 
  $g \in N_G\langle h^k \rangle \subseteq N_Gk!C$. 
  This implies $N_G[C] \subseteq \bigcup_{k \ge 1} N_Gk!C$. 
  The other inclusion follows from the fact that for
  $g \in N_Gk!C$ we have $k!C \subseteq gCg^{-1} \cap C$.
\end{proof}

Fix $C \in \calicyc$. Define a family of subgroups of $N_G[C]$ by
\begin{multline}
  \calg_G(C) := \bigl\{H \subseteq N_G[C] \mid [H : (H \cap C)] <
  \infty\bigr\} \\ \cup \bigl\{H \subseteq N_G[C] \mid |H| <
  \infty\bigr\}.
  \label{calg_G(C)}
\end{multline}
Notice that $\calg_G(C)$ consists of all finite subgroups of $N_G[C]$
and of all virtually cyclic subgroups of $N_G[C]$ which have an
infinite intersection with $C$.  Define a quotient group of $N_GC$ by
$$W_GC := N_GC/C.$$

\begin{lemma} \label{lem:dim(edub(G)_bounded_from_above} Let $n$ be an
  integer. Suppose that $G$ satisfies Condition (C)
  (see~\ref{cond:(C)}).  Suppose that there exists a $G$-$CW$-model
  for $\eub{G}$ with $\dim(\eub{G}) \le n$ and for every $C \in
  \calicyc$ there exists a $W_GC$-$CW$-model for $\eub{W_GC}$ with
  $\dim(\eub{W_GC}) \le n$.

  Then there exists a $G$-$CW$-model for $\edub{G}$ with
  $\dim(\edub{G}) \le n+1$.
\end{lemma}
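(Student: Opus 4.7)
The plan is to invoke the Lück--Weiermann construction of $\edub{G}$ from $\eub{G}$. Applying \cite[Theorem~2.3 and Corollary~2.8]{Lueck-Weiermann(2007)} to the pair of families $\calfin \subseteq \calvcyc$ with the commensurability equivalence relation on $\calvcyc \setminus \calfin$ (whose $G$-orbits of equivalence classes correspond bijectively to $G \backslash [\calicyc]$), one obtains a cellular $G$-pushout
\begin{equation*}
\begin{CD}
\coprod_{[C]} G \times_{N_G[C]} \eub{N_G[C]} @>>> \eub{G} \\
@VVV @VVV \\
\coprod_{[C]} G \times_{N_G[C]} \EGF{N_G[C]}{\calg_G(C)} @>>> \edub{G}
\end{CD}
\end{equation*}
in which the top horizontal and the left vertical arrows are $G$-cofibrations; here $[C]$ ranges over $G\backslash [\calicyc]$, and the family indexing the bottom-left entry agrees with~\eqref{calg_G(C)} because, for $K \subseteq N_G[C]$ infinite, the condition $|K \cap C| = \infty$ is equivalent to $[K : K\cap C] < \infty$. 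The pushout structure therefore yields $\dim \edub{G} \le \max\bigl\{\dim \eub{G},\ \sup_{[C]} \dim \EGF{N_G[C]}{\calg_G(C)}\bigr\}$.

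The top-right entry $\eub{G}$ has dimension $\le n$ by hypothesis, and restricting a chosen $(\le n)$-dimensional $G$-$CW$-model of $\eub{G}$ along $N_G[C] \hookrightarrow G$ produces an $N_G[C]$-$CW$-model of $\eub{N_G[C]}$ of the same dimension. It therefore suffices to build, for each $C \in \calicyc$, an $N_G[C]$-$CW$-model of $\EGF{N_G[C]}{\calg_G(C)}$ of dimension at most $n+1$. This is where the hypothesis $\dim \eub{W_GC} \le n$ enters. My first step is to pull back a given $(\le n)$-dimensional model of $\eub{W_GC}$ along the quotient $q\colon N_GC \twoheadrightarrow N_GC/C = W_GC$; an elementary fixed-point check shows the resulting $N_GC$-$CW$-complex is a model for the classifying space of the family $\{H \le N_GC : [H : H \cap C] < \infty\} = \calg_G(C) \cap N_GC$ and still has dimension $\le n$.

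To pass from $N_GC$ to the possibly strictly larger $N_G[C]$, I would exploit Lemma~\ref{lem:N_G[C]_is_colimC_Gk!C} together with Condition~(C): the ascending union $N_G[C] = \bigcup_k N_G(k!C)$, combined with Condition~(C), forces every $g \in N_G(k!C)$ to act on $k!C$ by $\pm \id$, yielding a coherent homomorphism $N_G[C] \to \operatorname{Isom}(\IR)$ in which $C$ acts by unit translation. Taking an appropriate equivariant product of the pulled-back $W_GC$-model with this isometric real line (and suitably assembling the $N_G(k!C)$-equivariant pieces along the colimit), one obtains an $N_G[C]$-$CW$-complex whose isotropy groups all belong to $\calg_G(C)$ and whose $H$-fixed sets are contractible for every $H \in \calg_G(C)$. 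Its dimension is at most $n+1$ because exactly one extra cell direction has been appended to the $n$-dimensional pulled-back model.

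The main obstacle is precisely this last passage. Since $C$ need not be normal in $N_G[C]$, there is no direct surjection $N_G[C] \twoheadrightarrow W_GC$ to pull back along; the construction must use Condition~(C) to reinterpret the $W_GC$-model geometrically via the $\pm 1$-action on each $k!C$, and then patch together the $N_G(k!C)$-equivariant pieces at the cost of exactly one extra cell dimension. Condition~(C) is the essential input that rules out Baumslag--Solitar-type inflation of normalizers and supplies the coherent scaling data. Once the $(n+1)$-dimensional model of $\EGF{N_G[C]}{\calg_G(C)}$ has been produced, plugging it into the Lück--Weiermann pushout and invoking the cofibration property of the left vertical arrow immediately gives $\dim \edub{G} \le \max\{n, n+1\} = n+1$, as required.
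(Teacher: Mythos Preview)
Your reduction via the L\"uck--Weiermann pushout to bounding $\dim \EGF{N_G[C]}{\calg_G(C)}$ is exactly the paper's first step, and your pull-back of $\eub{W_GC}$ along $N_GC \twoheadrightarrow W_GC$ to obtain an $n$-dimensional $N_GC$-model for $\calg_G(C)|_{N_GC}$ is also correct. The problem is the passage from $N_GC$ to $N_G[C]$. Condition~(C) does \emph{not} produce a homomorphism $N_G[C] \to \operatorname{Isom}(\IR)$: knowing that $gh^kg^{-1} = h^{\pm k}$ for some $k$ tells you nothing about $ghg^{-1}$ itself, so there is no coherent translation length for $g$. (Such an $\IR$ exists in the $\CAT(0)$ application, where it is the axis of $h$, but the lemma is stated purely group-theoretically.) Even granting such a map, your ``equivariant product'' is not defined: the first factor is only an $N_GC$-space, and since $N_GC \subseteq N_G(k!C) \subseteq N_G[C]$, you cannot restrict in the needed direction. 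The phrase ``suitably assembling the $N_G(k!C)$-equivariant pieces along the colimit'' names the obstacle rather than resolving it, because you have constructed only one piece, at the bottom level $N_GC$.

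What the paper does instead is observe that the hypothesis applies to \emph{every} infinite cyclic subgroup, in particular to each $k!C$. Thus for every $k$ one has an $n$-dimensional model for $\eub{W_G(k!C)}$, whose pull-back along $N_G(k!C) \twoheadrightarrow W_G(k!C)$ is an $n$-dimensional model for $\EGF{N_G(k!C)}{\calg_G(C)|_{N_G(k!C)}}$. Since every $H \in \calg_G(C)$ is finitely generated and hence already contained in some $N_G(k!C)$, the colimit results \cite[Lemma~4.2 and Theorem~4.3]{Lueck-Weiermann(2007)} apply to the directed union $N_G[C] = \bigcup_k N_G(k!C)$ and furnish the desired $(n+1)$-dimensional model for $\EGF{N_G[C]}{\calg_G(C)}$. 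The extra $+1$ comes from this telescope-type passage to the colimit, not from an $\IR$-factor.
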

\begin{proof}
  Because of~\cite[Theorem~2.3 and Remark~2.5]{Lueck-Weiermann(2007)}
  it suffices to show for every $C \in \calicyc$ that there is a
  $N_G[C]$-model for $\EGF{N_G[C]}{\calg_G(C)}$ with
  \begin{eqnarray}
    \dim(\EGF{N_G[C]}{\calg_G(C)}) & \le & n+1.
    \label{lem:dim(edub(G)_bounded_from_above:dim}
  \end{eqnarray}
  Because of Lemma~\ref{lem:N_G[C]_is_colimC_Gk!C} we have
$$N_G[C] = \colim_{k \to \infty}  N_Gk!C.$$
We conclude~\eqref{lem:dim(edub(G)_bounded_from_above:dim}
from~\cite[Lemma~4.2 and Theorem~4.3]{Lueck-Weiermann(2007)} since
every element $H \in \calg_G(C)$ is
finitely generated and hence lies already in $N_Gk!C$ for some $k > 0$,
by assumption there exists a $W_Gk!C$-$CW$-model for $\eub{W_Gk!C}$ with
$\dim(\eub{W_Gk!C}) \le n$, and $\res_{N_Gk!C \to W_Gk!C} \eub{W_Gk!C}$ is 
$\EGF{N_Gk!C}{\calg_G(C)|_{N_Gk!C)}}$.

\end{proof}

Now we are ready to prove
Theorem~\ref{the:dim(edub(G))_for_Cat(0)-group}.
\begin{proof}[Proof of Theorem~\ref{the:dim(edub(G))_for_Cat(0)-group}]~%
\ref{the:dim(edub(G))_for_Cat(0)-group:fin} Consider an integer $d
  \in \IZ$ with $d = 1$ or $d \ge 3$ such that $d \ge \topdim(X)$.
  The space $X$ is a model for $\jub{G}$ by~\cite[Corollary~2.8 in
  II.2. on page~178]{Bridson-Haefliger(1999)}.  We conclude from
  Theorem~\ref{the:d-dimensional_model_for_eub(G)_from_jub(G)}~%
\ref{the:d-dimensional_model_for_eub(G)_from_jub(G):dimension_from_J_to_E}
  that there is a $d$-dimensional model for $\eub{G}$.  
  \\[2mm]~\ref{the:dim(edub(G))_for_Cat(0)-group:vcyc} 
  We will use in the
  proof some basic facts and notions about isometries of proper
  complete $\CAT(0)$-spaces which can be found
  in~\cite[Chapter~II.6]{Bridson-Haefliger(1999)}.

  The group $G$ satisfies condition (C) by the following argument.
  Suppose that $gh^kg^{-1} = h^l$ for $g,h \in G$ with $|h| = \infty$
  and $k,l \in \IZ$.  The isometry $l_h \colon X \to X$ given by
  multiplication with $h$ is a hyperbolic isometry since it has no
  fixed point and is by assumption semisimple.  We obtain for the
  translation length $L(h)$ which is a real number satisfying $L(h) > 0$
  $$k \cdot L(h) = L(h^k) = L(gh^kg^{-1}) = L(h^l) = l \cdot L(h).$$
  This implies $k = l$.

Let $C \subseteq G$ be any infinite cyclic subgroup.  Choose a
generator $g \in C$.  The isometry $l_g \colon X \to X$ given by
multiplication with $g$ is a hyperbolic isometry. Let $\Min(g)\subset
X$ be the the union of all axes of $g$.  Then $\Min(g)$ is a closed
convex subset of $X$. There exists a closed convex subset $Y(g)
\subseteq X$ and an isometry
$$\alpha \colon \Min(g) \xrightarrow{\cong} Y(g) \times \IR.$$
The space $\Min(G)$ is $N_GC$-invariant since for each $h \in N_GC$ we
have $hgh^{-1} = g$ or $hgh^{-1} = g^{-1}$ and hence multiplication
with $h$ sends an axis of $g$ to an axis of $g$. The $N_GC$-action
induces a proper isometric $W_GC$-action on $Y(g)$. These claims
follow from~\cite[Theorem~6.8 in~II.6 on page~231 and Proposition~6.10
in~II.6 on page~233]{Bridson-Haefliger(1999)}.  The space $Y(g)$
inherits from $X$ the structure of a $\CAT(0)$-space and satisfies
$\topdim(Y(g)) \le \topdim(X)$. Hence $Y(g)$ is a model for
$\jub{W_GC}$ with $\topdim(Y(g)) \le \topdim(X)$
by~\cite[Corollary~2.8 in II.2. on
page~178]{Bridson-Haefliger(1999)}. We conclude from
Theorem~\ref{the:d-dimensional_model_for_eub(G)_from_jub(G)}~%
\ref{the:d-dimensional_model_for_eub(G)_from_jub(G):dimension_from_J_to_E}
that there is a $d$-dimensional model for $\eub{W_GC}$ for every
infinite cyclic subgroup $C \subseteq G$. Now
Theorem~\ref{the:dim(edub(G))_for_Cat(0)-group} follows from
Lemma~\ref{lem:dim(edub(G)_bounded_from_above}.
\end{proof}

Finally we prove Corollary~\ref{cor:manifold}.
\begin{proof}[Proof of Corollary~\ref{cor:manifold}]
  A complete Riemannian manifold $M$ with non-negative sectional
  curvature is a $\CAT(0)$-space (see~\cite[Theorem~IA.6 on page~173
  and Theorem~II.4.1 on page~193]{Bridson-Haefliger(1999)}.)  Since
  $G$ is virtually torsionfree, we can find a subgroup $G_0$ of finite
  index in $G$ such that $G_0$ is torsionfree and acts orientation
  preserving on $M$. Hence $G_0\backslash M$ is a closed orientable
  manifold of dimension $n$.  Hence $H_n(M;\IZ) = H_n(BG;\IZ) \not=
  0$. This implies that every $CW$-model $BG_0$ has at least dimension
  $n$. Since the restriction of $\eub{G}$ to $G_0$ is a
  $G_0$-$CW$-model for $EG_0$, we conclude $\hdim(\eub{G}) \ge
  n$. Since $M$ with the given $G_0$-action is a $G$-$CW$-model for
  $\eub{G}$ (see~\cite[Theorem 4.15]{Abels(1978)}), we conclude
$$\hdim(\eub{G}) = n = \topdim(M).$$
If $n \not = 2$, we conclude $\hdim(\edub{G}) \le n+1$ from
Theorem~\ref{the:dim(edub(G))_for_Cat(0)-group}. Since $\hdim(\eub{G})
\le 1 + \hdim(\edub{G})$ holds for all groups $G$
(see~\cite[Corollary~5.4]{Lueck-Weiermann(2007)}), we get
$$n-1 \le \hdim(\edub{G}) \le n+1$$
provided that $n \not= 2$.

Suppose $n = 2$. If $G_0$ is a torsionfree subgroup of finite index in
$G$, then $G_0\backslash X$ is a closed $2$-dimensional manifold with
non-negative sectional curvature. Hence $G_0$ is $\IZ^2$ or
hyperbolic.  This implies that $G$ is virtually $\IZ^2$ or
hyperbolic. Hence $\hdim(\edub{G}) \in \{2,3\}$ 
by~\cite[Example~5.21]{Lueck-Weiermann(2007)}
in the first case and 
by~\cite[Theorem~3.1,
Example~3.6, Theorem~5.8~(ii)]{Lueck-Weiermann(2007)}
or~\cite[Proposition~6, Remark~7 and Proposition~8]{Juan-Pineda-Leary(2006)} in the 
second case.

\end{proof}

\end{document}